\newcommand{\RR}{\mathbb R}
\newcommand{\QQ}{\mathbb Q}
\newcommand{\PP}{\mathbb P}
\newcommand{\ZZ}{\mathbb Z}
\newcommand{\f}[1]{\mathbf{#1}}
\newcommand{\rst}[1]{\ensuremath{ \left.\rule{0ex}{1.5ex}\right|_{#1}}}
\begin{document}

\title*{Construction of Smooth Isogeometric Function Spaces on Singularly Parameterized Domains}
\titlerunning{Smooth Isogeometric Function Spaces on Singularly Parameterized Domains}
\author{Thomas Takacs}
\institute{
Thomas Takacs \at \email{thomas.takacs@oeaw.ac.at} \\ Johann Radon Institute for Computational and Applied Mathematics (RICAM), Austrian Academy of Sciences, Linz, Austria \&  Department of Mathematics, University of Pavia, Italy\\
This is a pre-print of the following work: J.-D. Boissonnat et al. (EDS.): Curves and Surfaces 2014, LNCS 9213, pp. 433-451, 2015, Springer. Reproduced with permission of Springer International Publishing Switzerland. The final authenticated version is available online at: \\http://dx.doi.org/10.1007/978-3-319-22804-4\_30}
\maketitle

\abstract{We aim at constructing a smooth basis for isogeometric function spaces on domains of reduced geometric 
regularity. In this context an isogeometric function is the composition of a piecewise rational function with the 
inverse of a piecewise rational geometry parameterization. We consider two types of singular parameterizations, domains 
where a part of the boundary is mapped onto one point and domains where parameter lines are mapped collinearly at the boundary. 
\newline\indent
We locally map a singular tensor-product patch of arbitrary degree onto a triangular patch, thus splitting the parameterization 
into a singular bilinear mapping and a regular mapping on a triangular domain. This construction yields an isogeometric function 
space of prescribed smoothness. Generalizations to higher dimensions are also possible and are 
briefly discussed in the final section.}

\section{Introduction}
\label{sec:1}

In this paper we are dealing with isogeometric function spaces derived from singular NURBS parameterizations. 
We consider two different configurations of singular planar NURBS geometry parameterizations, leading to two different types of triangular domains. 
The goal of our construction is the definition of arbitrarily smooth isogeometric function spaces defined 
on these domains. The approach presented here can be generalized to other types of domains and to higher dimensions.  

The ability to construct test/ trial functions of high smoothness, suitable for numerical simulations, is one of the main features of isogeometric analysis, as introduced in \cite{Hughes2005}. B-spline and NURBS function spaces on standard tensor-product domains possess the possibility of $k$-refinement, creating a sequence of non-nested spaces of increasing degree and increasing smoothness. Hence, increasing degree and smoothness may lead to improved convergence \cite{BeiraodaVeiga2011}. Several applications in isogeometric analysis rely on function spaces of smoothness of higher order, like differential equations of higher order \cite{Cottrell2006}, or the analysis of shells \cite{Benson2010,Kiendl2009,Kiendl2010}, just to name a few examples. In all these applications, the results may be deteriorated if singular parameterizations are present. To overcome this deficiency, we present constructions leading to isogeometric functions spaces of arbitrary smoothness on singularly parameterized domains. 

We start with some preliminary definitions and notation on B-splines and NURBS in Section \ref{subsec:1} and on isogeometric functions in Section \ref{subsec:2}. The smoothness conditions of interest are presented in Section \ref{subsec:3}. In Section \ref{sectionTypeA} we develop the construction of smooth spaces over singular domains where one edge of the parameter domain is mapped onto one point in the physical domain. In Section \ref{sectionTypeB} we present a similar costruction for domains where two parameter directions are collinear at the boundary of the physical domain. Both constructions can be used to obtain circular domains, see also \cite{Lu2009}. We briefly discuss generalizations to higher dimension in Section \ref{sectionHigherDim} and conclude the presented results in Section \ref{sectionConclusion}.

\section{Preliminaries}
\label{sec:2}

Isogeometric function spaces $\mathcal{V}$, as they are present in isogeometric analysis, are built from an underlying B-spline or NURBS space. Hence, to introduce the notation needed, we start this preliminary section with recalling the notion of B-splines and NURBS. We do not give detailed definitions here and refer to standard literature for further reading \cite{PieglTiller1995,Farin1999,Prautzsch2002}.

\subsection{B-Splines and NURBS}
\label{subsec:1}

Univariate B-splines are piecewise polynomial functions. Given a degree $p\in\ZZ^+$ and a knot vector 
$\mathrm{S}=\left(s_{-p}, \ldots, s_{N+p+1}\right)$ of length $N+2p+2$, the $i$-th B-spline, 
for $i=0,\ldots,N+p$, is denoted by $B^p_{i}[\mathrm{S}](s)$. We assume that the parameter domain 
is the unit interval and that the knot vector is open, i.e. 
\begin{equation}
	0 = s_{-p} = \ldots = s_0 < s_1 \leq \ldots \leq s_N < s_{N+1} = \ldots = s_{N+p+1} = 1.
	\label{equationOpenKV}
\end{equation}
Note that any B-spline basis function can be represented via its local knot vector 
\begin{equation*}
	B^p_i[\mathrm{S}](s) = b[s_{i-p},\ldots,s_i,s_{i+1}](s).
\end{equation*}
Using this notation, the degree $p$ of the B-spline $B^p_i[\mathrm{S}]$ is implicitly 
given by the length $p+2$ of the local knot vector.

The concept of univariate B-splines can easily be generalized to two dimensions via a tensor-product construction. Let $p,q\in\ZZ^+$ and let $\mathrm{S}$ and $\mathrm{T}$ be open knot vectors fulfilling equation (\ref{equationOpenKV}). 
The parameter domain is set to be the box $\f B = [0,1]^2$, leading to 
the \emph{tensor-product B-spline space} 
\begin{equation*}
	\mathcal{S} = \mbox{span} \left( \left\{ B^p_i[\mathrm{S}] \, B^q_j[\mathrm{T}] : \f B \rightarrow \RR 
	\; | \; \mbox{ for } (0,0) \leq (i,j) \leq (N_1+p,N_2+q) \right\} \right).
\end{equation*}
The B-splines span a piecewise polynomial function space on a grid given by the knot vectors $\mathrm{S}$ and $\mathrm{T}$. 
Given a weight function $g_0 \in \mathcal{S}$, with $g_0(\f s) > 0$ for all $\f s \in \f B$, we can define a \emph{NURBS space} via 
\begin{equation*}
	\mathcal{N} = \left\{ \frac{f}{g_0}: \f B \rightarrow \RR \; | \; 
	f \in\mathcal{S} \right\}.
\end{equation*}
We can now define isogeometric function spaces. 

\subsection{Isogeometric functions}
\label{subsec:2}

We use the following standard definition of isogeometric functions over a physical domain $\Omega$, 
where we follow the notation in \cite{Takacs2014}. 
This definition is based on the concept of \emph{isogeometric analysis} introduced in \cite{Hughes2005}. 
For a given NURBS geometry parameterization 
\begin{equation*}
\f G = (G_1,G_2)^T = \left(\frac{g_1}{g_0},\frac{g_2}{g_0}\right)^T: \f B \rightarrow \overline{\Omega} \subset \RR^2,
\end{equation*}
with $G_1,G_2 \in \mathcal{N}$, the space of \emph{isogeometric functions} defined 
on the open domain $\Omega = \f G (\f B^\circ)$ is denoted by 
\begin{equation*}
	\mathcal{V} = \left\{ \varphi: \Omega \rightarrow \RR \; | \; 
	\varphi = F \circ \f G^{-1}, 
	\mbox{ with } F = \frac{f}{g_0} \in\mathcal{N} \right\}.
\end{equation*}
We assume that $\f G$ is invertible in the interior $\f B^\circ = ]0,1[^2$ of the box $\f B$, hence 
the functions $\varphi$ are well-defined. 
Note that an isogeometric function $\varphi$ can be defined via its graph surface in homogeneous coordinates
\begin{equation}
	\f f = (g_{0},g_{1},g_{2},f)^T: \f B \rightarrow \tilde{\Omega} \times \RR,
	\label{equationGraphf}
\end{equation}
with $r_j \in \mathcal{S}$ for $j=0,1,2,3$. Here $\tilde{\Omega}$ is given such that $\Pi(\tilde{\Omega}) = \Omega$, 
where the mapping $\Pi:(x_0,x_1,x_2)\mapsto (x_1/x_0,x_2/x_0)$ is the central projection 
onto the plane $x_0 = 1$.

In the following subsection we present smoothness conditions which are of interest in isogeometric analysis.

\subsection{Smoothness conditions}
\label{subsec:3}

We consider a notion of continuity, which may be of interest for any numerical application where a high order of smoothness is necessary. 
\begin{definition}
The space $\mathscr{C}^{k}(\overline{\Omega})$ of $\mathscr{C}^{k}$-continuous functions on the closure of $\Omega$ is defined as the space 
of functions $\varphi:\Omega\rightarrow\RR$ with $\varphi\in C^k(\Omega)$ such that there exists a unique 
limit 
\begin{equation*}
	\lim_{\f y \rightarrow \f x, \f y \in \Omega}
	\frac{\partial^{|\alpha|} \varphi(\f y)}{\partial x_1^{\alpha_1} \partial x_2^{\alpha_2}} = 
	\frac{\partial^{|\alpha|} \varphi(\f x)}{\partial x_1^{\alpha_1} \partial x_2^{\alpha_2}}
\end{equation*}
for all $\f x \in \partial\Omega = \overline{\Omega} \backslash \Omega$ and for all 
$|\alpha| = \alpha_1 + \alpha_2 \leq k$. Here $C^k(\Omega)$ is the traditional space of $k$-times continuously differentiable functions on the open domain $\Omega$. 
\end{definition}
The highest reasonable smoothness is smoothness of order $k=p-1$, where $p$ is the degree of the 
spline space $\mathcal{S}$. However, this may not be feasible for arbitrary domains. 

Another way to prescribe smoothness is by regularity in the sense of Sobolev spaces, i.e. 
$\mathcal{V}\subset H^{k}(\Omega)$. Note that $\mathscr{C}^{k}(\overline{\Omega}) \subset H^{k}(\Omega)$. However, for many isogeometric function spaces, the condition $\mathcal{V}\subset \mathscr{C}^{k}(\overline{\Omega})$ is equivalent to $\mathcal{V}\subset H^{k+1}(\Omega)$. 
If the domain parameterization $\f G$ is singular somewhere at the boundary $\partial\f B$, then the function space $\mathcal{V}$ may not be regular. For most settings $\mathcal{V} \subset \mathscr{C}^{0}(\overline{\Omega})$ and consequently $\mathcal{V} \subset H^1({\Omega})$ is not fulfilled (e.g. for patches of type A, see Section \ref{sectionTypeA}). For studies concerning Sobolev regularity on singular parameterizations in isogeometric analysis we refer to \cite{Takacs2011,Takacs2012-1}. The papers present construction schemes for $H^1$- and $H^2$-smooth isogeometric function spaces. 

In this paper we generalize the presented approach to $\mathscr{C}^{k}$-smoothness for arbitrary $k$. We consider two types of singular parameterizations. The first type A is a class of singular parameterizations where a part of the boundary of $\f B$ is mapped onto one point. 
The second type B covers singular parameterizations where the parameter lines in the physical domain are collinear at the boundary. 

\section{Singular tensor-product patches of type A}\label{sectionTypeA}

In this section we construct smooth isogeometric function spaces on singular patches of type A. 
A parameterization $\f G$ is called a \emph{singular mapping of type A} if it fulfills 
\begin{equation}
	\det \nabla \f G (\f s) = 0 \;\mbox{ for all }\; \f s \in \{0\}\times [0,1]. \label{equationTypeA}
\end{equation}
Hence, the part of the boundary $\{0\}\times [0,1] \subset \partial \f B$ of the parameter domain box $\f B$ is mapped onto one point in the physical domain. The class of singular patches we consider is derived from triangular B\'ezier patches. We start with a construction for B\'ezier patches which we then generalize to B-spline patches. Note that the condition (\ref{equationTypeA}) is more general then the configurations we consider in this section.

\subsection{Triangular B\'ezier patches as singular tensor-product B\'ezier patches}

As presented by Hu in \cite{Hu2001}, a triangular B\'ezier patch 
\begin{equation*}
	\bm{\rho} \; : \; \Delta \rightarrow \RR^d \; : \;  (u,v) \mapsto \sum_{i+j+k = p} \beta^p_{(i,j,k)}(u,v) \; \bm{\rho}_{i,j,k},
\end{equation*}
with control points $\bm{\rho}_{i,j,k} \in \RR^d$, parameter domain 
\begin{equation*}
	\Delta = \{ (u,v): 0\leq u \leq 1 , \; 0\leq v\leq u \}
\end{equation*}
and basis functions 
\begin{equation*}
	\beta^p_{(i,j,k)} \; : \; \Delta \rightarrow \RR \; : \;  (u,v) \mapsto \frac{p!}{i! j! k!} (1-u)^i v^j (u-v)^k,
\end{equation*}
can be represented as a tensor-product B\'ezier patch 
\begin{equation}
		\f f \; : \; \f [0,1]^2 \rightarrow \RR^d \; : \;  \f s = (s,t) \mapsto \sum^p_{i = 0} \sum^{p}_{j = 0} b^p_{i}(s) \; b^p_{j}(t) \; \f f_{i,j},
		\label{equationTransform}
\end{equation}
with Bernstein polynomials $b^p_{i}$ of degree $p$, where
\begin{equation*}
	\f f_{i,j} = \sum^{i}_{\ell=0}\binom{i}{\ell} \frac{ \binom{p-i}{j-\ell} }{ \binom{p}{j} } \bm{\varrho}_{p-i,\ell,i-\ell}
\end{equation*}
for $0\leq i,j \leq p$. The control points for each row are computed via degree elevation. For $i=0$ all control points are the result of degree elevation of a constant ``curve'', i.e. all points are equal
\begin{equation*}
	\f f_{0,j} = \bm{\rho}_{p,0,0},
\end{equation*}
hence the tensor-product B\'ezier patch $\f f(s,t)$ is singular at $s=0$. For $i=1$ the control points result from degree elevating a linear, for $i=2$ from degree elevating a quadratic curve, and so on.

The transformation leading to Equation (\ref{equationTransform}) can also be interpreted as a change of parameters 
\begin{equation}
	\f f = \bm{\rho} \circ \f u 
	\label{equationComposition}
\end{equation}
with the reparameterization 
\begin{eqnarray*}
(s,t) &\mapsto& (u,v), \;\; \mbox{with} \\ 
\f u(s,t) &=& (u(s,t),v(s,t)) = (s,s\,t)
\end{eqnarray*}
with $\f u \in (\QQ_1(\RR^2))^2$, $\bm{\rho} \in (\PP_k(\RR^2))^d$ and $\f f \in (\QQ_k(\RR^2))^d$. Note that the bilinear mapping $\f u(s,t)$ is singular for $s=0$. Here $\QQ_k(\RR^\ell)$ is the space of $\ell$-variate polynomials of maximal degree $\leq k$ and $\PP_k(\RR^\ell)$ is the space of $\ell$-variate polynomials with total degree $\leq k$. 

Selecting $\bm{\rho}_{p-i,\ell,i-\ell} \in \RR^4$, we get $\f f = \bm{\rho} \circ \f u: \f B\rightarrow \RR^4$ which may serve as a homogeneous graph surface of an isogeometric function as in (\ref{equationGraphf}). In this case we conclude $\mathcal{V} \subset \mathscr{C}^\infty (\overline{\Omega})$ if the rational triangular B\'ezier patch  $(\varrho_1/\varrho_0, \varrho_2/\varrho_0)^T$ is regular. Generalizing this construction we can define a smooth isogeometric function space on a certain class of domains containing a singularity. We will also give a detailed proof of the smoothness result in the following section.

\subsection{Smooth function spaces over a singular B-spline patch}

Given a tensor-product B-spline function space $\mathcal{S}$ of degree $(p,q)$ and prescribed order of smoothness $k \leq \min(p,q)$ we want to construct a function space $\mathcal{S}^k \subset \mathcal{S}$, as well as $\mathcal{V}^k \subset \mathcal{V}$ derived from $\mathcal{S}^k$, such that $\mathcal{V}^k\subset \mathscr{C}^{k}(\overline{\Omega})$. 

In the previous section we constructed a polynomial patch $\f f = \bm{\rho} \circ \f u$ that can be split into a singular bilinear part $\f u$ and a regular polynomial part $\bm{\rho}$ defined on a triangular domain. The core idea of the generalized approach is the following. Given a B-spline surface $\f f \in (\mathcal{S})^4$, we assume that $\f f$ is equivalent to a triangular patch up to order $k$ at the singularity. Hence, we introduce the function space $\mathcal{S}^k(\f u,\f S) \subset \mathcal{S}$. 
\begin{definition}\label{definitionSk}
The function space $\mathcal{S}^k(\f u,\f S) \subset \mathcal{S}$ is defined as the space of splines $f \in \mathcal{S}$, such that there exists a polynomial $\varrho \in \mathbb{P}_k$ fulfilling 
\begin{equation}
	\frac{\partial^{|\alpha|} f }{\partial s^{\alpha_1}\partial t^{\alpha_2}} (\f s) = 
	\frac{\partial^{|\alpha|} (\varrho \circ \f u) }{\partial s^{\alpha_1}\partial t^{\alpha_2}} (\f s) \; \mbox{ for all } \f s \in \f S,
\label{equationftD}
\end{equation}
for $0 \leq \alpha_1, \alpha_2 \leq k$, $|\alpha| = \alpha_1+\alpha_2$, where $\f u$ is the mapping 
\begin{eqnarray*}
	\f u : \;\; [0,1]^2 & \; \rightarrow \; & \Delta = \{ (u,v): 0\leq u \leq 1 , \; 0\leq v\leq u \} \\
	(s,t)^T & \; \mapsto \; & \left(s, s \, t\right)^T
\end{eqnarray*}
and $\f S = \{0\} \times [0,1]$.
\end{definition}
Note that if $k=q=p$, then $\mathcal{S}^p(\f u,\f S) \rst{[0,s_1]\times[0,1]} \circ \f u^{-1} = \PP_{p}$, where $\PP_{p}$ is the space of polynomials of total degree $\leq p$ and $s_1$ is the first interior knot of the knot vector $\mathrm{S}$. 

Using this approach, we obtain linear conditions on the B-spline 
basis functions. Certain linear combinations of B-spline basis functions will correspond to the 
basis functions on the triangular patch. Just as presented in \cite{Hu2001}, Definition \ref{definitionSk} is equivalent to 
the first row of control points being constant, the second row forming a linear curve, the third row a quadratic, and so on. 
This leads to the following definition.
\begin{definition}\label{definitionSkhat}
Let $k\leq \min(p,q)$. The basis $\mathbb{S}^k$ is defined via 
\begin{equation*}
\begin{array}{ll}
	\mathbb{S}^k = & \left\{ B^p_i[\mathrm{S}](s) b^i_{j}(t): 0\leq j\leq i \mbox{ and } 0\leq i\leq k \right\} \\ 
	& \cup \left\{ B^p_i[\mathrm{S}](s)B^q_j[\mathrm{T}](t): k+1\leq i\leq N_1+p \mbox{ and } 0\leq j\leq N_2+q \right\},
\end{array}
\end{equation*}
where $B^p_i[\mathrm{S}]B^q_j[\mathrm{T}]$, with $(0,0) \leq (i,j) \leq (N_1+p,N_2+q)$, is the standard basis of $\mathcal{S}$ and $b^i_{j}(t)$ is the $j$-th Bernstein polynomial of degree $i$. 
\end{definition}
Figure \ref{figureIndexset} gives a schematic depiction of the index set corresponding to $\mathbb{S}^k$ for $k=3$.
\begin{figure}[!ht]
    \centering
    \psfrag{i}[cc]{\scalebox{1}{$i$}}
    \psfrag{j}[cc]{\scalebox{1}{$j$}}
    \psfrag{k}[cc]{\scalebox{1}{$k$}}
    \psfrag{0}[cc]{\scalebox{1}{$0$}}
    \psfrag{1}[cc]{\scalebox{1}{$1$}}
    \psfrag{2}[cc]{\scalebox{1}{$2$}}
    \psfrag{3}[cc]{\scalebox{1}{$3$}}
    \includegraphics[width=0.4\textwidth]{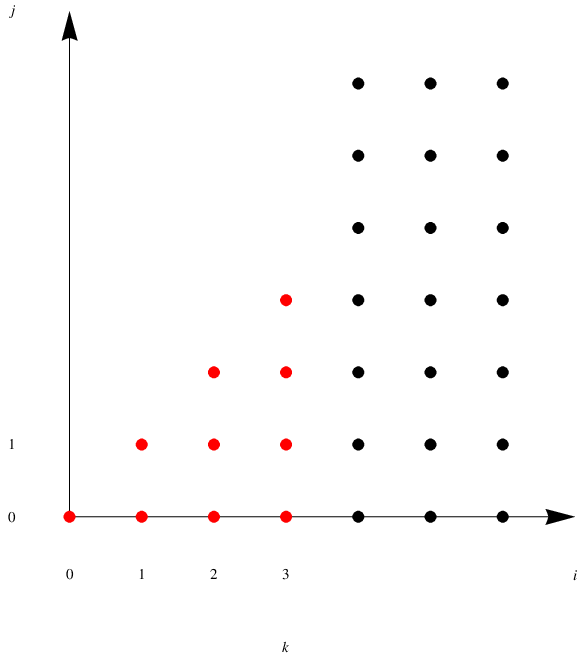}
    \caption{Index set corresponding to $\mathbb{S}^k$ for $k=3$}\label{figureIndexset}
\end{figure}
\begin{lemma}\label{lemma1}
Let $k\leq \min(p,q)$. The set $\mathbb{S}^k$ given in Definition \ref{definitionSkhat} is a basis for the space $\mathcal{S}^k(\f u,\f S)$ given in Definition \ref{definitionSk}. 
\end{lemma}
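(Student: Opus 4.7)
The plan is to establish three things: the inclusion $\mathbb{S}^{k}\subset\mathcal{S}^{k}(\mathbf{u},\mathbf{S})$, linear independence of $\mathbb{S}^{k}$, and spanning. The pivotal reformulation I would use is this: composing any $\varrho\in\mathbb{P}_{k}$ with $\mathbf{u}(s,t)=(s,st)$ yields
\begin{equation*}
\varrho(s,st) \;=\; \sum_{n=0}^{k} s^{n}\,p_{n}(t), \qquad p_{n}(t)=\sum_{\ell=0}^{n} c_{n-\ell,\ell}\,t^{\ell}\in\mathbb{P}_{n},
\end{equation*}
and the map $\varrho\leftrightarrow(p_{0},\ldots,p_{k})$ is bijective. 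Since $\mathbf{S}=\{0\}\times[0,1]$ is one-dimensional, matching all mixed derivatives of order $\leq k$ on $\mathbf{S}$ is equivalent to matching the pure $s$-derivatives $\partial_{s}^{n} f(0,\cdot)$ as functions of $t$ (tangential differentiation in $t$ then supplies the mixed ones automatically). Hence $f\in\mathcal{S}^{k}(\mathbf{u},\mathbf{S})$ if and only if $\tfrac{1}{n!}\partial_{s}^{n} f(0,t)\in\mathbb{P}_{n}$ for every $n=0,\ldots,k$.

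With this characterisation the inclusion is short. Openness of the knot vector $\mathrm{S}$ gives $B^{p}_{i}[\mathrm{S}]$ a zero of order exactly $i$ at $s=0$ for $0\leq i\leq p$, so every $B^{p}_{i}[\mathrm{S}](s)\,B^{q}_{j}[\mathrm{T}](t)$ with $i\geq k+1$ has all $s$-derivatives of order $\leq k$ vanishing on $\mathbf{S}$ and trivially satisfies the condition with $\varrho\equiv 0$; every $B^{p}_{i}[\mathrm{S}](s)\,b^{i}_{j}(t)$ with $i\leq k$ has $n$-th $s$-Taylor coefficient equal to a scalar multiple of $b^{i}_{j}(t)$, a univariate polynomial of degree $i\leq n$, hence in $\mathbb{P}_{n}$. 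For linear independence, I would rewrite a vanishing combination as $\sum_{i=0}^{N_{1}+p}B^{p}_{i}[\mathrm{S}](s)\,h_{i}(t)=0$; freezing $t$ and invoking linear independence of the $B^{p}_{i}[\mathrm{S}]$ in $s$ forces every $h_{i}\equiv 0$, after which linear independence of the Bernsteins $\{b^{i}_{j}\}_{j=0}^{i}$ (for $i\leq k$) and of $\{B^{q}_{j}[\mathrm{T}]\}_{j}$ (for $i\geq k+1$) kills the remaining coefficients.

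The main work, and the step I expect to be the real obstacle, is the spanning direction. Given $f\in\mathcal{S}^{k}(\mathbf{u},\mathbf{S})$, I would expand $f(s,t)=\sum_{i=0}^{N_{1}+p}B^{p}_{i}[\mathrm{S}](s)\,e_{i}(t)$ with $e_{i}\in\mathrm{span}\{B^{q}_{j}[\mathrm{T}]\}_{j}$. By the Taylor reformulation,
\begin{equation*}
\tfrac{1}{n!}\,\partial_{s}^{n} f(0,t) \;=\; \sum_{i=0}^{n} a_{n,i}\,e_{i}(t) \;=\; p_{n}(t)\in\mathbb{P}_{n}, \qquad a_{n,i}=\tfrac{1}{n!}(B^{p}_{i}[\mathrm{S}])^{(n)}(0),
\end{equation*}
and $a_{n,n}\neq 0$ by the exact-order vanishing noted above. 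This is a lower-triangular linear system in $e_{0},\ldots,e_{k}$ which I would solve inductively in $n$, forcing $e_{n}\in\mathbb{P}_{n}$ for each $n\leq k$. Since $n\leq k\leq q$, each such polynomial $e_{n}$ of degree $\leq n$ can then be rewritten in the Bernstein basis $\{b^{n}_{j}\}_{j=0}^{n}$, so the $i\leq k$ part of the expansion falls into the triangular half of $\mathbb{S}^{k}$, and the $i\geq k+1$ part is already in the tensor-product half. The delicate ingredients that make the argument work are (a) the exact vanishing order of $B^{p}_{i}[\mathrm{S}]$ at $s=0$, which secures $a_{n,n}\neq 0$ and rests on openness of the knot vector, and (b) the bound $k\leq q$, which is precisely what lets the resulting polynomial solution $e_{n}$ be represented within the $q$-degree B-spline family in $t$.
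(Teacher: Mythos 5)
Your proof is correct, and it rests on the same two pillars as the paper's: the exact vanishing order $i$ of $B^p_i[\mathrm{S}]$ at $s=0$ (coming from the open knot vector), and the identification of $\varrho\circ\f u$ with $\sum_{n=0}^{k} s^n p_n(t)$, $p_n\in\PP_n$, which turns membership in $\mathcal{S}^k(\f u,\f S)$ into the condition that the $n$-th $s$-Taylor coefficient of $f$ at $s=0$ has $t$-degree at most $n$. The forward inclusion is argued essentially identically in both. Where you genuinely go beyond the paper is the reverse inclusion: the paper reduces to $\varrho$ a monomial $u^iv^j$, writes $f=s^{i+j}t^j+s^{k+1}r(s,t)$ on the first knot span, and then asserts that ``one can show easily'' that $f\in\mathrm{span}(\mathbb{S}^k)$. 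Your lower-triangular system $\sum_{i\le n}a_{n,i}e_i=p_n$ with $a_{n,n}\neq 0$, solved inductively for the coefficient functions $e_i(t)$ of the tensor-product expansion of $f$, is exactly the argument that fills that gap; it also treats a general $\varrho$ at once rather than monomial by monomial, and you make explicit both the linear independence and the role of the hypothesis $k\le q$ (needed so that $e_n\in\PP_n$ is representable in the degree-$q$ spline space in $t$), points the paper leaves implicit. No gaps.
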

\begin{proof}
Obviously, we have $\mbox{span}(\mathbb{S}^k) \subset \mathcal{S}$ since
\begin{equation*}
	 b^i_{j}(t) \in \mbox{span} \left( \left\{ B^q_j[\mathrm{T}](t): 0\leq j\leq N_2+q \right\} \right)
\end{equation*}
for all $i,j$ with $0 \leq j\leq i\leq q$.

We first show that $\mbox{span}(\mathbb{S}^k) \subseteq \mathcal{S}^k(\f u, \f S)$, i.e. all functions $f \in \mathbb{S}^k$ fulfill equation (\ref{equationftD}) for some polynomial $\varrho \in \PP_{\min(p,q)}$. Since the condition (\ref{equationftD}) needs to be fulfilled for $s = 0$, we assume that $s<s_1$, which is the first knot of the knot vector $\mathrm{S}$. For $i>k+1$, the functions $B^p_i[\mathrm{S}](s)B^q_j[\mathrm{T}](t)$ fulfill $\frac{\partial^{\alpha}}{\partial s^{\alpha}} B^p_i[\mathrm{S}](s)B^q_j[\mathrm{T}](t) = 0$ for all $\alpha \leq k$. Hence (\ref{equationftD}) is fulfilled with $\varrho \equiv 0$. For $B^p_i[\mathrm{S}](s) b^i_{j}(t)$ with $j \leq i\leq k$ we have that $B^p_i[\mathrm{S}](s) = s^i \, r(s)$, where $r(s)$ is some polynomial in $s$ of degree $\leq k-i$. Moreover, $b^i_{j}(t)$ is a polynomial in $t$ of degree $i$. Hence, $B^p_i[\mathrm{S}](s) b^i_{j}(t) = r(s) \, s^i \, b^i_{j}(t)$ can be represented as a polynomial $\varrho$ in $u=s$ and $v=s\,t$ with total degree $\leq k$. This is exactly the form $B^p_i[\mathrm{S}](s) b^i_{j}(t) = \varrho \circ \f u$ required in equation (\ref{equationftD}). 

What remains to be shown is that $\mathcal{S}^k(\f u, \f S) \subseteq \mbox{span}(\mathbb{S}^k)$. Assume that $f \in \mathcal{S}$ is equivalent to a monomial $u^i\,v^j$, $i+j\leq k$, with respect to condition (\ref{equationftD}). In that case we conclude $f(s,t) = s^{i+j}\, t^j + s^{k+1}\,r(s,t)$ for $s < s_1$, where $r$ is some polynomial of degree $p-k-1$ in $s$ and of degree $q$ in $t$. One can show easily that $f \in \mbox{span}(\mathbb{S}^k)$ in that case. Finally, if $f \in \mathcal{S}$ is equivalent to $\varrho \equiv 0$ with respect to (\ref{equationftD}), then $f(s,t) = s^{k+1}\,r(s,t)$ for $s < s_1$, and again $f \in \mbox{span}(\mathbb{S}^k)$, which concludes the proof. \hfill $\square$
\end{proof}

One can show that if $\bm{\rho}$ is regular and $\f G$ is $C^k$-smooth, then the mapping 
\begin{equation*}
	\f F: \Delta \rightarrow \RR \; \mbox{ with } \; 
	\f F = \f G \circ \f u^{-1}
\end{equation*}
is a $C^k$-smooth mapping from the triangle $\Delta$ to the domain $\overline{\Omega}$. 
Here, the inverse of the singular mapping $\f u$ is equal to 
\begin{equation*}
	\f u^{-1}(u,v) = \left(u,\frac{v}{u}\right)^T.
\end{equation*}
This leads to a split of the mapping $\f G$ into a bilinear singular transformation $\f u$ and a regular mapping $\f F$, via 
\begin{equation*}
	\f G = \f F \circ \f u.
\end{equation*}
The various introduced mappings and domains are depicted in Figure \ref{figureMapping}. 
\begin{figure}[!ht]
    \centering
    \psfrag{s}[cc]{\scalebox{1}{$s$}}
    \psfrag{t}[cc]{\scalebox{1}{$t$}}
    \psfrag{u}[cc]{\scalebox{1}{$u$}}
    \psfrag{v}[cc]{\scalebox{1}{$v$}}
    \psfrag{B}[cc]{\scalebox{1}{$[0,1]^2$}}
    \psfrag{D}[cc]{\scalebox{1}{$\Delta$}}
    \psfrag{O}[cc]{\scalebox{1}{$\overline{\Omega}$}}
    \psfrag{F}[cc]{\scalebox{1}{$\f F$}}
    \psfrag{G}[cc]{\scalebox{1}{$\f G$}}
    \psfrag{U}[cc]{\scalebox{1}{$\f u$}}
    \includegraphics[width=0.6\textwidth]{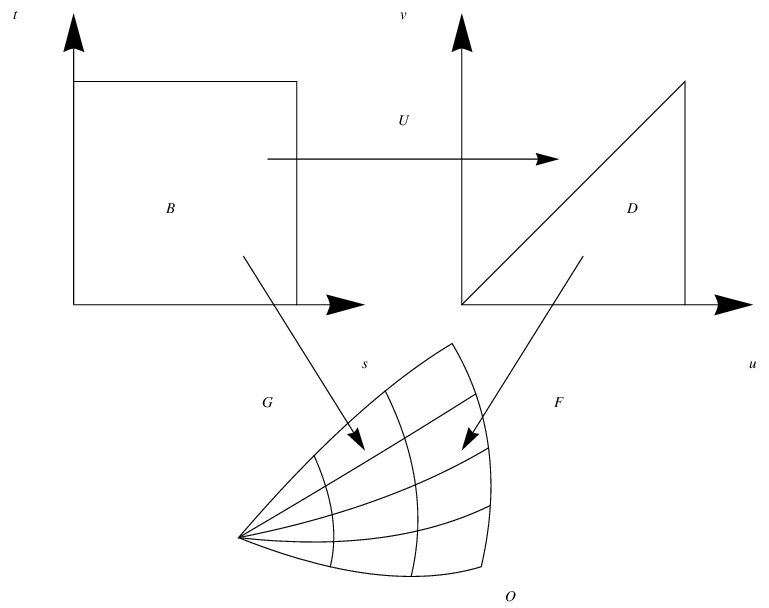}
    \caption{Mappings $\f F$, $\f u$, $\f G$ for an example domain of type A}\label{figureMapping}
\end{figure}

Using this definition we can construct an isogeometric function space fulfilling $\mathcal{V}^k \subseteq \mathcal{V} \cap \mathscr{C}^k(\overline{\Omega})$, for $k\leq \min(p,q)$.
\begin{theorem}
Let $k\leq \min(p,q)$, let $\mathcal{S} \subset C^k(\f B)$ and let $\mathcal{V}^k$ be the isogeometric function space derived from $\mathcal{S}^k(\f u,\f S)$ with $\f G = (g_1/g_0,g_2/g_0)^T$ with $g_0,g_1,g_2\in \mathcal{S}^k(\f u,\f S)$. Moreover, $\f G(s,t)$ is regular for all $s>0$ and $t\in[0,1]$.

Then $\mathcal{V}^k \subset \mathscr{C}^k(\overline{\Omega})$ if $(g_0,g_1,g_2)^T$ is equivalent to $(\varrho_0,\varrho_1,\varrho_2)^T$ with respect to (\ref{equationftD}) and $\left(\frac{\varrho_1}{\varrho_0},\frac{\varrho_2}{\varrho_0}\right)^T$ is regular in $\Delta$.
\end{theorem}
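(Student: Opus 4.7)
The plan is to factor the parameterization through the triangular domain via the equalities $\f G = \f F \circ \f u$ and $F = \ti F \circ \f u$ already set up in the paper, reducing the claim $\varphi = F \circ \f G^{-1} \in \mathscr{C}^k(\overline{\Omega})$ to showing (i) the corresponding objects on $\Delta$ extend $C^k$-smoothly up to the apex $(0,0)$ of $\Delta$, and (ii) $\f F$ is $C^k$-regular on all of $\overline{\Delta}$, so that the inverse function theorem gives $\f F^{-1} \in C^k(\overline{\Omega})$.

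Given $\varphi = (f/g_0)\circ \f G^{-1}$ with $f\in\mathcal{S}^k(\f u,\f S)$, I would set $\ti g_i = g_i\circ\f u^{-1}$, $\ti f = f\circ\f u^{-1}$, $\ti F = \ti f/\ti g_0$ and $\f F = (\ti g_1/\ti g_0,\ti g_2/\ti g_0)^T$ (initially defined only for $u>0$), and exploit that each $\psi\in\{f,g_0,g_1,g_2\}$ lies in $\mathcal{S}^k(\f u,\f S)$: on the polynomial piece $[0,s_1]\times[0,1]$, Definition~\ref{definitionSk} together with Taylor expansion in $s$ at $s=0$ gives the decomposition
\begin{equation*}
	\psi(s,t) = \varrho_\psi(s,s t) + s^{k+1} h_\psi(s,t),
\end{equation*}
with $\varrho_\psi\in\PP_k$ the polynomial of Definition~\ref{definitionSk} and $h_\psi$ polynomial. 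Composing with $\f u^{-1}(u,v)=(u,v/u)$ yields $\ti\psi(u,v)=\varrho_\psi(u,v)+u^{k+1}h_\psi(u,v/u)$, whose polynomial summand is $C^\infty$ on $\overline{\Delta}$.

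The central technical lemma is: for $h$ smooth and $\Phi(u,v) = u^{k+1}h(u,v/u)$ defined for $u>0$, $\Phi$ extends to a $C^k$ function on $\overline{\Delta}$ with all partials of order $\leq k$ vanishing at $(0,0)$. I would prove this by induction on $j+\ell\leq k$: the chain rule writes $\partial_u^j\partial_v^\ell\Phi$ as a finite sum of terms $u^{k+1-\ell-j-a}v^a(\partial_s^{j'}\partial_t^{\ell'}h)(u,v/u)$ with $a\geq 0$ and $j'+\ell'\leq j+\ell$ (each $v$-derivative produces $1/u$ which is absorbed by $u^{k+1}$, each $u$-derivative produces $-v/u^2$); the bound $0\leq v\leq u$ on $\Delta$ then dominates every such term by a constant times $u^{k+1-j-\ell}\geq u$, which tends to $0$ as $(u,v)\to(0,0)$ in $\Delta$. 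Away from the apex, $\Phi$ is smooth because $u>0$; beyond $s_1$, where the local polynomial decomposition no longer applies, $\ti\psi$ is $C^k$ directly because $\psi\in C^k(\f B)$ and $\f u^{-1}$ is smooth for $u>0$.

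With the lemma, each $\ti\psi\in C^k(\overline{\Delta})$. The strict positivity $g_0>0$ on $\f B$, combined with the decomposition evaluated at $s=0$ giving $g_0(0,t)=\varrho_0(0,0)$, forces $\varrho_0(0,0)>0$, so $\ti g_0>0$ on $\overline{\Delta}$ by continuity, and $\ti F,\f F\in C^k(\overline{\Delta})$. Regularity of $\f F$ on $\overline{\Delta}\setminus\{(0,0)\}$ follows from regularity of $\f G$ for $s>0$ together with regularity of $\f u$ for $s>0$, and at the apex the Jacobian of $\f F$ equals that of $(\varrho_1/\varrho_0,\varrho_2/\varrho_0)^T$ by the decomposition, hence is nonzero by hypothesis. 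The inverse function theorem then yields $\f F^{-1}\in C^k(\overline{\Omega})$, and the chain rule gives $\varphi=\ti F\circ\f F^{-1}\in\mathscr{C}^k(\overline{\Omega})$. The principal obstacle is the inductive bookkeeping in the lemma, handled by carrying an invariant of the shape ``$\partial_u^j\partial_v^\ell\Phi$ is a finite sum of $u^{\alpha}v^{\beta}(\partial_s^{j'}\partial_t^{\ell'}h)(u,v/u)$ with $\alpha+\beta\geq k+1-j-\ell$, $\beta\geq 0$, $j'+\ell'\leq j+\ell$'', which combines with $v\leq u$ to preserve a positive power of $u$ for $j+\ell\leq k$.
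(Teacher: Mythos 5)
Your proposal is correct, and at the strategic level it follows the same route as the paper: factor $\f G = \f F\circ\f u$ and reduce everything to the regularity of the triangular patch on $\Delta$. The difference lies in what actually gets proved. The paper's proof disposes of the singular point in one line --- ``due to (\ref{equationftD}) this is equivalent to $\frac{\varrho_3}{\varrho_0}\circ\f u^{-1}\circ\f u\circ\left(\frac{\varrho_1}{\varrho_0},\frac{\varrho_2}{\varrho_0}\right)^{-1}\in\mathscr{C}^{k}$'' --- which silently discards the part of $f,g_0,g_1,g_2$ not captured by the polynomials $\varrho_i$; since (\ref{equationftD}) only constrains derivatives on $\f S=\{0\}\times[0,1]$, this replacement is exactly the nontrivial step. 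Your decomposition $\psi=\varrho_\psi\circ\f u+s^{k+1}h_\psi$ and the lemma that $u^{k+1}h(u,v/u)$ extends to a $\mathscr{C}^{k}$ function on $\overline{\Delta}$ with all partials of order $\le k$ vanishing at the apex is precisely the justification the paper omits, and your inductive invariant ($\alpha+\beta\ge k+1-j-\ell$, $\beta\ge 0$, combined with $0\le v\le u$ on $\Delta$) is sound. Two small points to tighten: the identification of the Jacobian of $\f F$ at the apex with that of $(\varrho_1/\varrho_0,\varrho_2/\varrho_0)^T$ uses $k\ge 1$ (for $k=0$ only continuity is claimed and a direct argument suffices), and invoking the inverse function theorem at the apex, which is a corner point of $\overline{\Delta}$, formally requires either extending $\f F$ to a full neighbourhood or arguing via the continuous extension of $\nabla\f F$ together with injectivity of $\f F$ on $\Delta$; both are routine. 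In short: same approach as the paper, but your write-up supplies the technical lemma on which the theorem actually hinges.
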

\begin{proof}
Given an isogeometric function $\varphi = f \circ \f G^{-1} \in\mathcal{V}^k$. Due to Lemma \ref{lemma1} the homogeneous graph surface $\f f = (g_0,g_1,g_2,f)^T$ fulfills (\ref{equationftD}) for some $(\varrho_0,\varrho_1,\varrho_2,\varrho_3)^T$. The condition $\varphi\in\mathscr{C}^{k}(\overline{\Omega})$ is
given by 
\begin{equation*}
	\frac{f}{g_0} \circ \left(\frac{g_1}{g_0},\frac{g_2}{g_0}\right)^{-1} \in \mathscr{C}^{k}(\overline{\Omega}). 
\end{equation*}
Since $\f G$ is regular for $s>0$ and $\f G \in C^k(\f B)$, we conclude that $\f G^{-1} \in \mathscr{C}^{k}(\f G([\epsilon,1]\times[0,1]))$ by definition. Hence, it remains to be shown that 
\begin{equation*}
	\frac{f}{g_0} \circ \left(\frac{g_1}{g_0},\frac{g_2}{g_0}\right)^{-1} \in \mathscr{C}^{k}(\f G([0,\epsilon]\times[0,1])). 
\end{equation*}
Due to (\ref{equationftD}) this is equivalent to 
\begin{equation*}
	\frac{\varrho_3}{\varrho_0} \circ \f u^{-1} \circ \f u \circ \left(\frac{\varrho_1}{\varrho_0},\frac{\varrho_2}{\varrho_0}\right)^{-1} \in \mathscr{C}^{k}(\f G([0,\epsilon]\times[0,1])). 
\end{equation*}
Since $\varrho_i \in C^\infty$, this condition is equivalent to $\left(\frac{\varrho_1}{\varrho_0},\frac{\varrho_2}{\varrho_0}\right)$ being invertible 
which concludes the proof.  \hfill $\square$
\end{proof}

In Definition \ref{definitionSkhat} we have already given a basis $\mathbb{S}^k$ for the function space $\mathcal{S}^k(\f u,\f S)$. In the next section we propose an algorithm to determine the coefficients of the linear conditions with respect to the standard basis of $\mathcal{S}$ yielding the new basis functions in $\mathbb{S}^k$. 

\subsection{Algorithm to construct the new basis functions}

In this section we describe an algorithm to find a representation for the new basis functions 
\begin{small}
\begin{equation*}
	\begin{array}{ccccc}
	& & & & B^p_k[\mathrm{S}](s)\, b[0,1,\ldots,1](t) \\	
	& & B^p_2[\mathrm{S}](s)\, b[0,1,1,1](t) & & \vdots \\
	& B^p_1[\mathrm{S}](s)\, b[0,1,1](t) & B^p_2[\mathrm{S}](s)\, b[0,0,1,1](t) & & \\
	B^p_0[\mathrm{S}](s)\, b[0,1](t) & B^p_1[\mathrm{S}](s)\, b[0,0,1](t) & B^p_2[\mathrm{S}](s)\, b[0,0,0,1](t) & \ldots & B^p_k[\mathrm{S}](s)\, b[0,\ldots,0,1](t)
	\end{array}
\end{equation*}
\end{small}
or equivalently 
$$
	\left\{ b[s_{i-p},\ldots,s_{i+1}](s) b^i_{j}(t): 0\leq j\leq i \mbox{ and } 0\leq i\leq k \right\},
$$
in terms of the basis $B^p_{i}[\mathrm{S}](s)B^q_{j}[\mathrm{T}](t)$ of the space $\mathcal{S}$. Here $b^i_{j}(t)$ is the $j$-th 
Bernstein polynomial of degree $i$. 
The algorithm is composed of three steps: degree elevation for $t$, knot insertion for $t$ and tensor-product multiplication with basis functions in $s$-direction. The presented algorithms are taken from standard literature \cite{PieglTiller1995,Prautzsch2002}.

\paragraph*{1. Perform degree elevation in $t$-direction}
Let $\mathrm{E^q_i}$ be the matrix corresponding to the degree elevation of a Bernstein polynomial of degree $i$ represented in terms of Bernstein polynomials of degree $q>i$, i.e.
$$
(b^i_{j}(t))^T_{j=0,\ldots,i} = \mathrm{E^q_i} (b^q_{j}(t))^T_{j=0,\ldots,q}.
$$
The matrix $\mathrm{E^q_i}$ is of dimension $(i+1)\times(q+1)$ and has the form 
$$
\mathrm{E^q_i} = \left( \binom{i}{\ell} \frac{ \binom{q-i}{j-\ell} }{ \binom{q}{j} } \right)_{\ell=0,\ldots,i \; \times \; j=0,\ldots,q},
$$
see e.g. \cite{Prautzsch2002}. In Section \ref{subsectionExampleSingular} we list some of the matrices $\mathrm{E^q_i}$ for example configurations.

\paragraph*{2. Perform knot insertion for $t$-direction}
Let $\mathrm{K_{\tau}}$ be the matrix corresponding to the knot insertion of interior knots $\tau = (t_1,t_2,\ldots t_N)$ of the knot vector $\mathrm{T}$, leading to 
$$
(b^q_{j}(t))^T_{j=0,\ldots,q} = \mathrm{K_{\tau}} (b[t_{j-q},\ldots,t_j,t_{j+1}](t))^T_{j=0,\ldots,N+q}.
$$

The knot insertion matrix $\mathrm{K_{\tau}}$ can be defined via iteratively inserting the knots into the knot vector. The following algorithm gives the resulting matrix for insertion of the single knot $t_i$ into the given knot vector with interior knots up to $t_{i-1}$. A B-spline of degree $q$ with knot vector $(0,\ldots,0,t_1,\ldots,t_{i-1},1,\ldots,1)$ can be represented as a B-spline of degree $q$ with knot vector $(0,\ldots,0,t_1,\ldots,t_{i-1},t_i,1,\ldots,1)$ using knot insertion. The corresponding transformation matrix $\mathrm{K_{t_i}}$ is given by
\begin{small}
\begin{equation*}
	\left( \begin{array}{r} \vdots 
	\\ b[t_{i-q-2},\ldots,t_{i-1}](t) 
	\\ b[t_{i-q-1},\ldots,t_{i-1},1](t) 
	\\ b[t_{i-q},\ldots,1,1](t) 
	\\ \vdots 
	\\ b[t_{i-2},t_{i-1},1,\ldots,1](t) 
	\\ b[t_{i-1},1,\ldots,1](t) 
	\end{array} \right) = 
	\left( \begin{array}{ccccccc}
	\ddots &  &  &  &  &  & \vdots  \\
	 & 1 & 0 & 0 &  & 0 & 0  \\
	 & 0 & 1 & 1-\lambda_{i-q-1} &  & 0 & 0 \\
	 & 0 & 0 & \lambda_{i-q-1} &  & 0 & 0 \\
	 &  &  &   & \ddots &  &  \\
	 & 0 & 0 & 0 &  & 1-\lambda_{i-1} & 0 \\
	\ldots & 0 & 0 & 0 &   & \lambda_{i-1} & 1 
	\end{array} \right)
	\left( \begin{array}{r} \vdots 
	\\ b[t_{i-q-2},\ldots,t_{i-1}](t) 
	\\ b[t_{i-q-1},\ldots,t_{i}](t) 
	\\ b[t_{i-q},\ldots,t_{i},1](t) 
	\\ \vdots 
	\\ b[t_{i-1},t_{i},1,\ldots,1](t) 
	\\ b[t_{i},1,\ldots,1](t) 
	\end{array} \right),
\end{equation*}
\end{small}
where 
\begin{equation*}
	\lambda_{j} = \frac{t_i - t_{j}}{1 - t_{j}}.
\end{equation*}
Using this construction, the knot insertion matrix $\mathrm{K_{\tau}}$ is given via 
$$
\mathrm{K_{\tau}} = \mathrm{K_{t_1}} \mathrm{K_{t_2}} \ldots \mathrm{K_{t_N}}.
$$
In the last step we multiply with the corresponding $s$-dependent functions.

\paragraph*{3. Compute tensor-product basis}
Combining steps 1 and 2 with the tensor product representation leads to 
\begin{eqnarray}
(B^{p}_{i}[\mathrm{S}](s) b^i_{j}(t))^T_{j=0,\ldots,i} &=& \mathrm{E^q_i} (B^{p}_{i}[\mathrm{S}](s) b^q_j(t))^T_{j=0,\ldots,q} \nonumber \\
 &=& \mathrm{E^q_i} \mathrm{K_{\tau}} ( B^{p}_{i}[\mathrm{S}](s) \; B^{q}_{j}[\mathrm{T}](t) )^T_{j=0,\ldots,N+q},\nonumber
\end{eqnarray}
for $0 \leq i \leq p$.

In the following we compute the coefficients for some example configurations.

\subsection{Some example configurations}\label{subsectionExampleSingular}

We start with an example of a patch of degree $p=q=2$. 
\begin{example}
Let $\mathrm{S}=(0,0,0,h,2h,3h,\ldots)$ and $\mathrm{T}=(0,0,0,1/4,1/2,3/4,1,1,1)$. We want to find a representation of the basis $\mathbb{S}^2$ for $\mathcal{S}^2(\f u, \f S)$ with respect to the standard basis $B^2_{i}[\mathrm{S}]B^2_{j}[\mathrm{T}]$ of $\mathcal{S}$. We denote the new basis functions by $\tilde{B}^p_{(i,j)} = B^{p}_{i}[\mathrm{S}](s) b^i_{j}(t)$. 
The basis functions we need to construct are $\tilde{B}^2_{(i,j)}$, with $0\leq i\leq 2$ and $0\leq j\leq i$.

The degree elevation matrices $\mathrm{E^q_i}$ for $q=2$ are given by 
\begin{equation*}
	\mathrm{E^2_0} = \left( \begin{array}{lll} 1 & 1 & 1 \end{array} \right),
	\mathrm{E^2_1} = \left( \begin{array}{lll} 1 & \frac{1}{2} & 0 \\ 0 & \frac{1}{2} & 1 \end{array} \right),
	\mathrm{E^2_2} = \left( \begin{array}{lll} 1 & 0 & 0 \\ 0 & 1 & 0 \\ 0 & 0 & 1 \end{array} \right)
\end{equation*}
and the knot insertion matrix $K_t$, with $(b^2_i)^T_{i=0,1,2} = K_t (B^2_j[\mathrm{T}])^T_{j=0,\ldots,5}$, fulfills 
\begin{equation*}
	K_t = \left( \begin{array}{ccccccc}
	1 & \frac{3}{4} & \frac{3}{8} & \frac{1}{8} & 0 & 0 \\
	0 & \frac{1}{4} & \frac{1}{2} & \frac{1}{2} & \frac{1}{4} & 0 \\
	0 & 0 & \frac{1}{8} & \frac{3}{8} & \frac{3}{4} & 1 
	\end{array} \right).
\end{equation*}
Hence we conclude 
\begin{equation*}
	\tilde{B}^2_{(0,0)}  = \mathrm{E^2_0} K_t 
	(B^2_0[\mathrm{S}] B^2_j[\mathrm{T}])^T_{j=0,\ldots,5} = 
	\left( \begin{array}{ccccccc}
	1 & 1 & 1 & 1 & 1 & 1 
	\end{array} \right)
	(B^2_0[\mathrm{S}] B^2_j[\mathrm{T}])^T_{j=0,\ldots,5},
\end{equation*}

\begin{equation*}
	\left( \begin{array}{r}
	\tilde{B}^2_{(1,0)}  
	\\ \tilde{B}^2_{(1,1)}  
	\end{array} \right)  = \mathrm{E^2_1} K_t 
	(B^2_1[\mathrm{S}] B^2_j[\mathrm{T}])^T_{j=0,\ldots,5} = 
	\left( \begin{array}{ccccccc}
	1 & \frac{7}{8} & \frac{5}{8} & \frac{3}{8} & \frac{1}{8}  & 0 \\
	0 & \frac{1}{8} & \frac{3}{8} & \frac{5}{8} & \frac{7}{8} & 1 
	\end{array} \right)
	(B^2_1[\mathrm{S}] B^2_j[\mathrm{T}])^T_{j=0,\ldots,5}
\end{equation*}
and
\begin{equation*}
	\left( \begin{array}{r}
	\tilde{B}^2_{(2,0)}  
	\\ \tilde{B}^2_{(2,1)}  
	\\ \tilde{B}^2_{(2,2)}  
	\end{array} \right) = \mathrm{E^2_2} K_t
	(B^2_2[\mathrm{S}] B^2_j[\mathrm{T}])^T_{j=0,\ldots,5}
	= 
	\left( \begin{array}{ccccccc}
	1 & \frac{3}{4} & \frac{3}{8} & \frac{1}{8} & 0 & 0 \\
	0 & \frac{1}{4} & \frac{1}{2} & \frac{1}{2} & \frac{1}{4} & 0 \\
	0 & 0 & \frac{1}{8} & \frac{3}{8} & \frac{3}{4} & 1 
	\end{array} \right)
	(B^2_2[\mathrm{S}] B^2_j[\mathrm{T}])^T_{j=0,\ldots,5}.
\end{equation*}
\end{example}
The left hand side of figure \ref{figureSingularPatches} depicts a schematic overview of the bi-quadratic patch. The newly defined basis functions are visualized via their Greville ascissae (red dots). The part of the domain containing the singularity is the red triangle to the left. The part colored in light red is the support of the newly defined basis functions. The remaining part of the patch is not influenced by the modification of the function space. One standard basis function is visualized via its Greville abscissa and support (colored in blue and light blue, respectively).
\begin{figure}[!ht]
    \centering
    \includegraphics[width=0.3\textwidth]{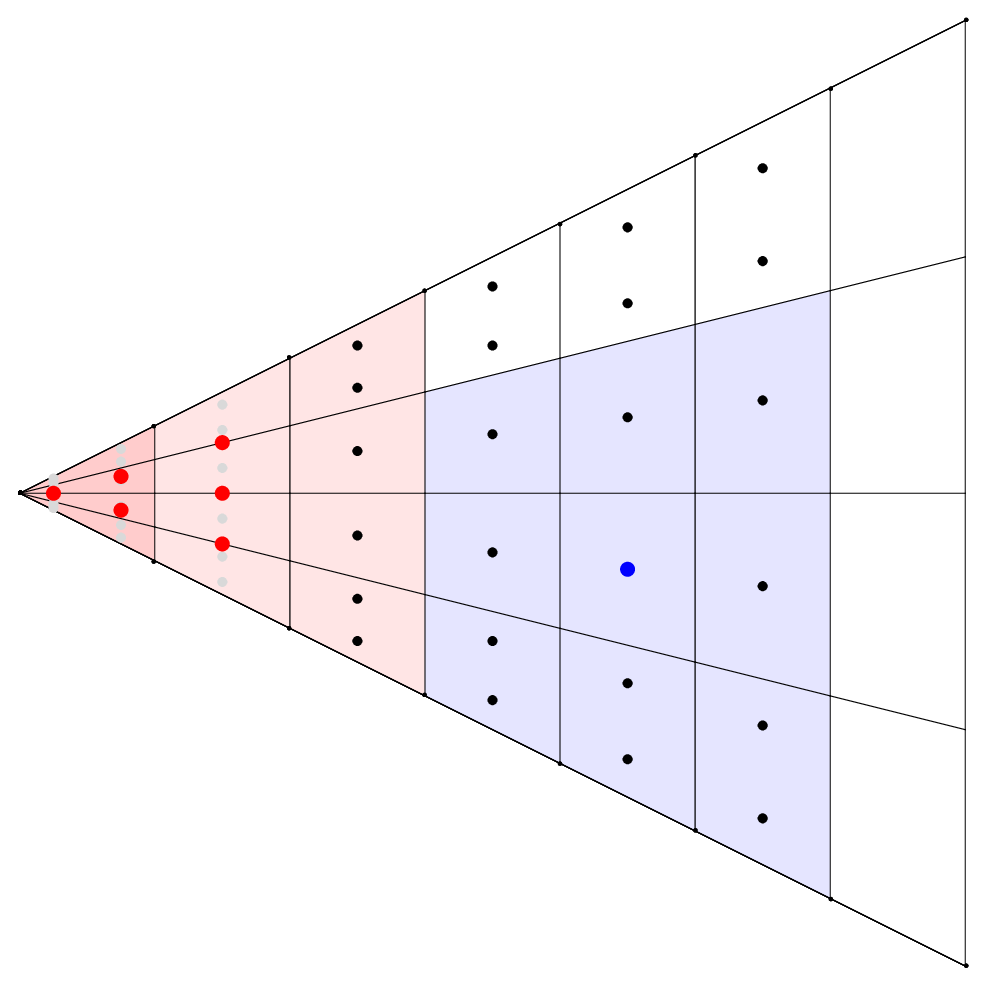}\hspace{0.1\textwidth}
    \includegraphics[width=0.3\textwidth]{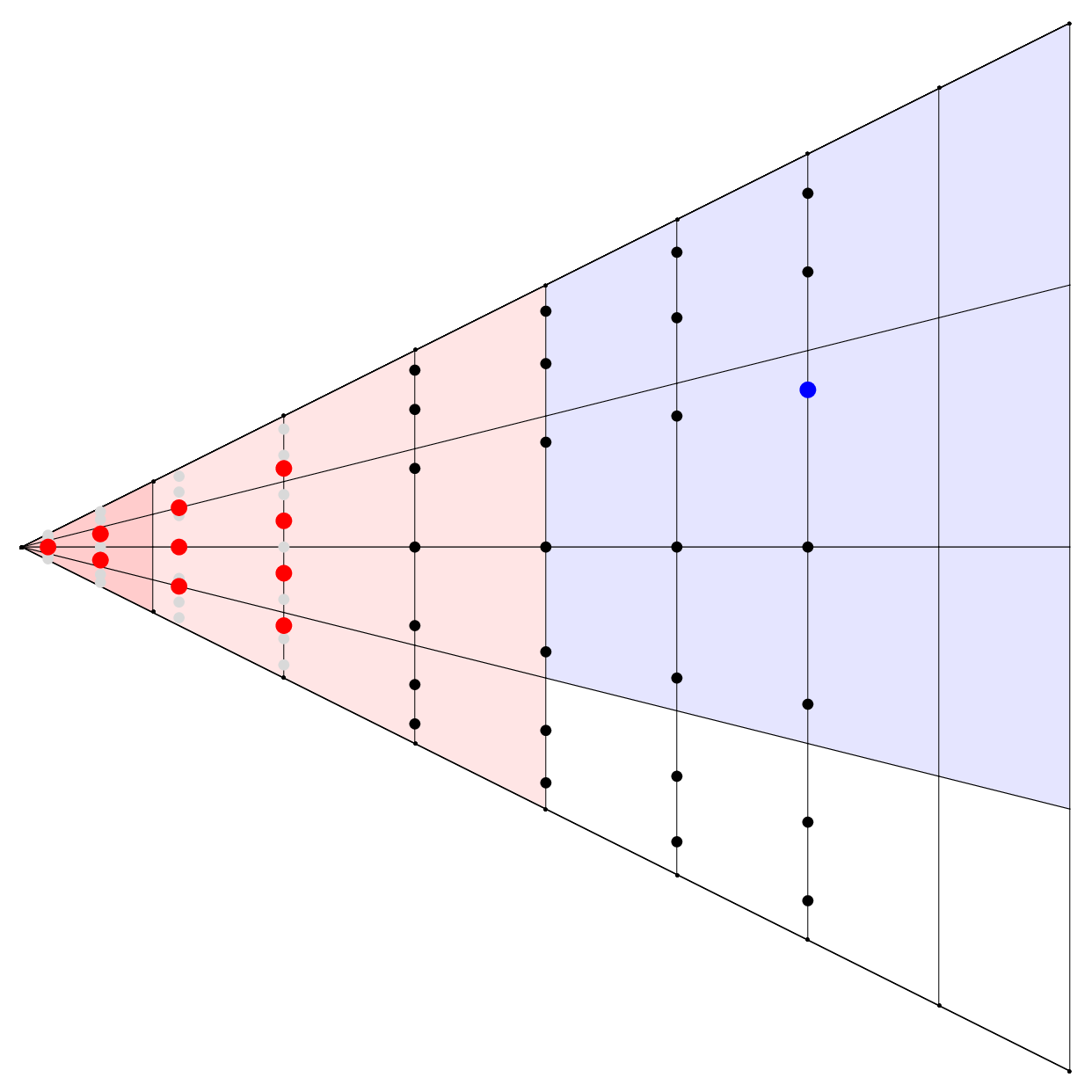}
    \caption{Quadratic (left) and cubic (right) singular B-spline patch of type A}\label{figureSingularPatches}
\end{figure}

The second example is a patch of degree $p=q=3$.
\begin{example}
Let $\mathrm{S}$ and $\mathrm{T}$ be the same knot vectors as in the previous example. Again, we represent the new basis functions $\tilde{B}^3_{(i,j)}$, with $0\leq i\leq 3$ and $0\leq j\leq i$, of $\mathcal{S}^k$ with respect to the standard basis $B^3_{i}[\mathrm{S}]B^3_{j}[\mathrm{T}]$ of $\mathcal{S}$.

Note that in general the degree elevation matrix $\mathrm{E^q_0}$ is a row vector of length $q+1$ with entry $1$ in each column. This derives from the fact that $\mathrm{E^q_0}$ arises from degree elevation of a constant function. Obviously, the matrix $\mathrm{E^q_q}$ is the unit matrix of size $(q+1)\times(q+1)$. 
The remaining degree elevation matrices and the knot insertion matrix $K_t$ fulfill 
\begin{equation*}
\begin{array}{lll}
	\mathrm{E^3_1} = \left( \begin{array}{cccc} 1 & \frac{2}{3} & \frac{1}{3} & 0 \\ 0 & \frac{1}{3} & \frac{2}{3} & 1 \end{array} \right), & 
	\hspace{10pt}
	\mathrm{E^3_2} = \left( \begin{array}{cccc} 1 & \frac{1}{3} & 0 & 0 \\ 0 & \frac{2}{3} & \frac{2}{3} & 0 \\ 0 & 0 & \frac{1}{3} & 1 \end{array} \right), & 
	\hspace{10pt}
	K_t = 
	\left( \begin{array}{cccccccc}
	1 & \frac{3}{4} & \frac{3}{8} & \frac{3}{32} & 0 & 0 & 0 \\
	0 & \frac{1}{4} & \frac{1}{2} & \frac{13}{32} & \frac{1}{8} & 0 & 0 \\
	0 & 0 & \frac{1}{8} & \frac{13}{32} & \frac{1}{2} & \frac{1}{4} & 0 \\
	0 & 0 & 0 & \frac{3}{32} & \frac{3}{8} & \frac{3}{4} & 1 
	\end{array} \right).
\end{array}
\end{equation*}
Hence we conclude 
\begin{equation*}
	\tilde{B}^3_{(0,0)} = 
	\left( \begin{array}{ccccccc}
	1 & 1 & 1 & 1 & 1 & 1 & 1
	\end{array} \right)
	(B^3_0[\mathrm{S}] B^3_j[\mathrm{T}])^T_{j=0,\ldots,6},
\end{equation*}
\begin{equation*}
	\left( \begin{array}{r}
	\tilde{B}^3_{(1,0)}  
	\\ \tilde{B}^3_{(1,1)}  
	\end{array} \right) = 
	\left( \begin{array}{ccccccc}
	1 & \frac{11}{12} & \frac{3}{4} & \frac{1}{2} & \frac{1}{4} & \frac{1}{12}  & 0 \\
	0 & \frac{1}{12} & \frac{1}{4} & \frac{1}{2} & \frac{3}{4} & \frac{11}{12} & 1 
	\end{array} \right)
	(B^3_1[\mathrm{S}] B^3_j[\mathrm{T}])^T_{j=0,\ldots,6}
\end{equation*}
and
\begin{equation*}
	\left( \begin{array}{r}
	\tilde{B}^3_{(2,0)}  
	\\ \tilde{B}^3_{(2,1)}  
	\\ \tilde{B}^3_{(2,2)}  
	\end{array} \right) = 
	\left( \begin{array}{ccccccc}
	1 & \frac{5}{6} & \frac{13}{24} & \frac{11}{48} & \frac{1}{24} & 0 & 0 \\
	0 & \frac{1}{6} & \frac{5}{12} & \frac{13}{24} & \frac{5}{12} & \frac{1}{6} & 0 \\
	0 & 0 & \frac{1}{24} & \frac{11}{48} & \frac{13}{24} & \frac{5}{6} & 1 
	\end{array} \right)
	(B^3_2[\mathrm{S}] B^3_j[\mathrm{T}])^T_{j=0,\ldots,6}
\end{equation*}
as well as
\begin{equation*}
	\left( \begin{array}{r}
	\tilde{B}^3_{(3,0)}  
	\\ \tilde{B}^3_{(3,1)}  
	\\ \tilde{B}^3_{(3,2)}  
	\\ \tilde{B}^3_{(3,3)}  
	\end{array} \right) = 
	\left( \begin{array}{ccccccc}
	1 & \frac{3}{4} & \frac{3}{8} & \frac{3}{32} & 0 & 0 & 0 \\
	0 & \frac{1}{4} & \frac{1}{2} & \frac{13}{32} & \frac{1}{8} & 0 & 0 \\
	0 & 0 & \frac{1}{8} & \frac{13}{32} & \frac{1}{2} & \frac{1}{4} & 0 \\
	0 & 0 & 0 & \frac{3}{32} & \frac{3}{8} & \frac{3}{4} & 1 
	\end{array} \right)
	(B^3_3[\mathrm{S}] B^3_j[\mathrm{T}])^T_{j=0,\ldots,6}
\end{equation*}
\end{example}
The right hand side of Figure \ref{figureSingularPatches} depicts a schematic overview of the bi-cubic patch. The structure is the same as for the previous example.

A simple consequence of the tensor-product structure of the newly defined basis $\mathbb{S}^k$ of the function space $\mathcal{S}^k(\f u,\f S)$ is that we can also define a corresponding dual basis. 

\subsection{Dual basis}

In this section we present a construction of a dual basis for the basis presented in Definition \ref{definitionSkhat}. Recall that the basis $\mathbb{S}^k$ is given by 
\begin{equation*}
\begin{array}{ll}
	 & \left\{ B^p_i[\mathrm{S}](s) b^i_{j}(t): 0\leq j\leq i \mbox{ and } 0\leq i\leq k \right\}  \\ 
	\cup & \left\{ B^p_i[\mathrm{S}](s)B^q_j[\mathrm{T}](t): 0\leq j\leq N+p \mbox{ and } k+1\leq i\leq M+q \right\} .
\end{array}
\end{equation*}
Let $\{\lambda^s_{\ell}\}_{\ell=0,\ldots,M+p}$ be a dual basis of $\{B^p_i[\mathrm{S}](s)\}_{i=0,\ldots,M+p}$ and let $\{\lambda^t_{\ell}\}_{\ell=0,\ldots,N+q}$ be a dual basis of $\{B^q_j[\mathrm{T}](t)\}_{j=0,\ldots,N+q}$, with 
\begin{equation*}
	\lambda^s_{\ell} (B^p_i[\mathrm{S}](s)) = \delta^\ell_i \mbox{ and } \lambda^t_{\ell} (B^q_j[\mathrm{T}](t)) = \delta^\ell_j.
\end{equation*}
One possibility for such a dual basis is presented in \cite{Schumaker2007}. Moreover, let $\{\mu^{i}_{\ell}\}_{\ell=0,\ldots,i}$ be a dual basis to the Bernstein polynomials $\{b^i_{j}(t)\}_{j=0,\ldots,i}$ of degree $i$, with 
\begin{equation*}
	\mu^{i}_{\ell} (b^i_j(t)) = \delta^\ell_j.
\end{equation*}
Then, since the construction of the function space is tensor-product, the functionals 
\begin{equation*}
\begin{array}{ll}
	 & \left\{ \lambda^s_{i} \mu^{i}_{j}: 0\leq j\leq i \mbox{ and } 0\leq i\leq k \right\}  \\ 
	\cup & \left\{ \lambda^s_{i} \lambda^{t}_{j}: 0\leq j\leq N+p \mbox{ and } k+1\leq i\leq M+p \right\} 
\end{array}
\end{equation*}
form a dual basis for the basis $\mathbb{S}^k$ as given in Definition \ref{definitionSkhat}.

\section{Singular tensor-product patches of type B}\label{sectionTypeB}

In this section we discuss the construction of a smooth basis for singular parameterizations of type B, which have collinear parameter directions at a point of the boundary. A parameterization $\f G$ is called a \emph{singular mapping of type B} if the partial derivatives are collinear and in opposite direction at $(s,t) = (0,0)$, i.e. there exists a $\lambda>0$ such that 
\begin{equation*}
	\frac{\partial \f G}{\partial s} (0,0) = - \lambda \frac{\partial \f G}{\partial t} (0,0),
\end{equation*} 
leading to $\det \nabla \f G (0,0) = 0$. In the following we give a construction for B-spline function spaces leading to smooth isogeometric spaces, similar to the construction presented for patches of type A. 
\begin{definition}
Let $k\leq \min(p,q)$. The function space $\mathcal{S}^k(\f u,\f S) \subset \mathcal{S}$ is defined as the space of splines $f \in \mathcal{S}$, such that there exists a polynomial $\varrho \in \mathbb{P}_k$ fulfilling 
\begin{equation*}
	\frac{\partial^{|\alpha|} f }{\partial s^{\alpha_1}\partial t^{\alpha_2}} (\f s) = 
	\frac{\partial^{|\alpha|} (\varrho \circ \f u) }{\partial s^{\alpha_1}\partial t^{\alpha_2}} (\f s) \; \mbox{ for all } \f s \in \f S,
\end{equation*}
for $0 \leq \alpha_1, \alpha_2 \leq k$, $|\alpha| = \alpha_1+\alpha_2$, where $\f u$ is the mapping 
\begin{eqnarray*}
	\f u : \;\; [0,1]^2 & \; \rightarrow \; & \Delta = \{ (u,v): 0\leq u \leq 1 , \; -1+u\leq v\leq 1-u \} \\
	(s,t)^T & \; \mapsto \; & \left(s \, t, t - s \right)^T.
\end{eqnarray*}
and $\f S = \{(0,0)\}$.

Moreover, we can define a Bernstein-like basis $\mathbb{S}^k$ via 
\begin{equation*}
\begin{array}{ll}
	\mathbb{S}^k = & \left\{ \tilde{B}^k_{(i,j)}(s,t): 0\leq i,j\leq k \mbox{ and } i+j \leq k \right\} \\ 
	& \cup \left\{ B^p_i[\mathrm{S}](s)B^q_j[\mathrm{T}](t): 0\leq i\leq N_1+p, \; 0\leq j\leq N_2+q \mbox{ and } \max(i,j) > k \right\},
\end{array}
\end{equation*}
where $B^p_i[\mathrm{S}]B^q_j[\mathrm{T}]$, with $(0,0) \leq (i,j) \leq (N_1+p,N_2+q)$, is the standard basis of $\mathcal{S}$ and 
\begin{equation}
	\tilde{B}^k_{(i,j)} \in \mbox{span} \left(
	\left\{ B^p_{\ell_1}[\mathrm{S}](s)B^q_{\ell_2}[\mathrm{T}](t): 0\leq \ell_1,\ell_2 \leq k \right\} \right)
	\label{equationTildeBspan}
\end{equation}
are defined in such a way that 
\begin{equation*}
	\frac{\partial^{|\alpha|} \tilde{B}^k_{(i,j)} }{\partial s^{\alpha_1}\partial t^{\alpha_2}} (\f s) = 
	\frac{\partial^{|\alpha|} (\beta^k_{(i,j)} \circ \f u) }{\partial s^{\alpha_1}\partial t^{\alpha_2}} (\f s) \; \mbox{ for all } \f s \in \f S,
\end{equation*}
for $0 \leq \alpha_1, \alpha_2 \leq k$, $|\alpha| = \alpha_1+\alpha_2$, with triangular Bernstein basis functions 
\begin{equation*}
	\beta^k_{(i,j)} \; : \; \Delta \rightarrow \RR \; : \;  (u,v) \mapsto \frac{k!}{i! j! (k-i-j)!} \left(\frac{1-u-v}{2}\right)^i \left(\frac{1-u+v}{2}\right)^j u^{k-i-j}.
\end{equation*} 
\end{definition}
\begin{remark}
One can show easily, that $\mathbb{S}^k$ is in fact a basis for $\mathcal{S}^k(\f u,\f S)$. Moreover, similar to singular mappings of type A, the isogeometric function space $\mathcal{V}^k$ derived from $\mathcal{S}^k(\f u,\f S)$ fulfills $\mathcal{V}^k \subset \mathscr{C}^k(\overline{\Omega})$ if the underlying triangular patch is regular.
\end{remark}
We do not go into the details of the construction but present an example configuration allowing for $\mathcal{C}^2$-smooth bi-cubic isogeometric functions.
\begin{example}\label{exampleTypeB}
Let $p=q=3$ and $k=2$. We construct the geometry mapping from a bi-quadratic rational triangular patch representing a quarter of a circle. Applying degree elevation to a bi-quadratic triangular B\'ezier parameterization $\f T$ given by its homogeneous control points 
\begin{equation*}
	\begin{array}{lccc}
	\f t_{(i,j,2-i-j)} & j=0 & j=1 & j=2\\
	\hline 
	i=0 & (1,0,0)^T & (2,0,1)^T & (1,0,1)^T \\
	i=1 & (2,1,0)^T & (\sqrt{2},\sqrt{2},\sqrt{2})^T &  \\
	i=2 & (1,1,0)^T &  & 
	\end{array}
\end{equation*}
leads to a singular tensor-product patch $\f G = \f T \circ \f u$ with control points as depicted in Figure \ref{figureSingPatch}. The blue control points correspond to standard basis functions and the red control points correspond to basis functions $B^p_{\ell_1}[\mathrm{S}]B^q_{\ell_2}[\mathrm{T}]$, with $0 \leq \ell_1,\ell_2 \leq 2$, that span the space containing the new basis functions $\tilde{B}^k_{(i,j)}$, with $0 \leq i+j \leq 2$, as in equation (\ref{equationTildeBspan}). 
\begin{figure}[!ht]
    \centering
    \includegraphics[width=0.3\textwidth]{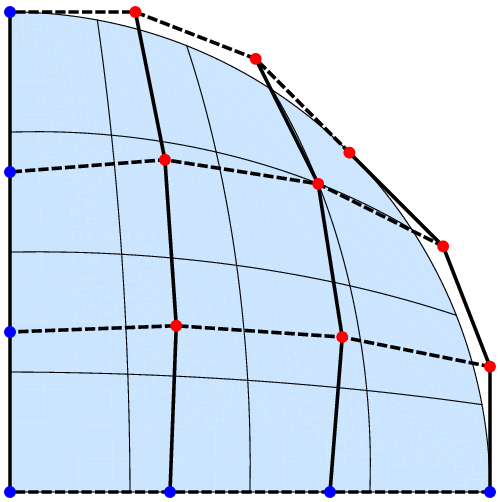}
    \caption{Parameterization and control points for a singular patch of type B for Example \ref{exampleTypeB}}\label{figureSingPatch}
\end{figure}
\end{example}
In the following we briefly discuss a way to generalize to higher dimensions.

\section{Constructions for higher dimension}\label{sectionHigherDim}

The approach presented here can also be generalized to higher dimensions. On the one hand one can generate smooth isogeometric function spaces on surfaces embedded in $\RR^3$ directly by substituting the planar triangular patch with a triangular surface patch. This may be for interest when dealing with partial differential equations on surfaces or for implementations of a boundary element method (e.g. \cite{Simpson2014}).

The basic idea behind this generalization is to consider $\f f = (g_0,g_1,g_2,g_3,f)^T$, with $g_i,f \in \mathcal{S}^k(\f u,\f S)$ for either type A or type B. Then, the isogeometric function 
\begin{eqnarray*}
	\varphi : \;\; \Omega & \; \rightarrow \; & \RR \\
	\f x & \; \mapsto \; & \frac{f}{g_0}\circ\left( \frac{g_1}{g_0},\frac{g_2}{g_0},\frac{g_3}{g_0} \right)^{-1}(\f x)
\end{eqnarray*}
defined on the surface $\Omega = \f G(\f B^\circ) \subset \RR^3$ is smooth of order $k$ if the underlying triangular surface patch is regular. 

On the other hand, one can define smooth isogeometric spaces on singularly parameterized volumetric domains. Similar to the bivariate case, one can again define an isogeometric function represented via its graph in homogeneous coordinates
\begin{equation*}
\f f = (g_0,g_1,g_2,g_3,f)^T : [0,1]^3 \rightarrow \tilde \Omega \times \RR
\end{equation*}
with $\tilde \Omega$ being the homogeneous representation of the physical domain $\Omega$. On a tetrahedral domain, given by the tri-linear singular mapping $\f u(r,s,t) = (r, r\,s , r\,s\,t)$, we can define a basis according to 
\begin{equation*}
	\frac{\partial^{|\alpha|} \tilde{B}^k_{\f i} }{\partial r^{\alpha_1}\partial s^{\alpha_2}\partial t^{\alpha_3}} (\f s) = 
	\frac{\partial^{|\alpha|} (\beta^k_{\f i} \circ \f u) }{\partial r^{\alpha_1}\partial s^{\alpha_2}\partial t^{\alpha_3}} (\f s) \; \mbox{ for all } \f s = (r,s,t) \in \f S,
\end{equation*}
for $\f S = \{0\}\times [0,1]\times [0,1]$, $0 \leq \alpha_1, \alpha_2, \alpha_3 \leq k$, $|\alpha| = \alpha_1+\alpha_2+\alpha_3$, and tri-variate tetrahedral Bernstein polynomials $\beta^k_{\f i}$ for $\f i = (i_1,i_2,i_3)$ with $0\leq i_1+i_2+i_3 \leq k$. Moreover, one needs to enforce smoothness along the face $s=0$ of the unit box, which collapses to a line in physical space. Such a construction corresponds to the findings in \cite{Takacs2014} about the smoothness conditions of isogeometric functions on volumetric patches.

\section{Conclusion}\label{sectionConclusion}

In this paper we presented a local mapping technique to construct isogeometric functions of arbitrary smoothness over singularly parameterized domains. The construction works for domains of arbitrary dimension. We focused on two dimensional patches containing exactly one point of singularity in physical space. However, the concept can be generalized to embedded surfaces and volumes as well as to structurally more complex domains. 

One direction of future research is the study and development of a refinement scheme maintaining the smoothness without enforcing additional smoothness conditions after each refinement step. Another area of interest, which arises for the presented isogeometric function spaces, is the question of approximation properties and convergence behavior. 
It is not clear, following the presented construction, whether or not the approximation properties of the function space are optimal. 

\section{Acknowledgments}

The work presented here is partially supported by the Italian MIUR through the FIRB ``Futuro in Ricerca'' Grant RBFR08CZ0S and by the European Research Council through the FP7 Ideas Consolidator Grant \emph{HIgeoM}. This support is gratefully acknowledged.

\end{document}